\newtheorem{theorem}{Theorem}
\newtheorem{corollary}[theorem]{Corollary}
\theoremstyle{definition}
\newtheorem{remark}{Remark}
\title{Preserving Extreme Singular Values with One Oblivious Sketch}
\author[1]{John M. Mango\footnote{mango.john@mak.ac.ug}}
\author[2]{Ronald Katende\footnote{rkatende@kab.ac.ug}}
\affil[1]{\emph{\small{Department of Mathematics, Makerere University, Kampala, Uganda}}}
\affil[2]{\emph{\small{Department of Mathematics, Kabale University, Kikungiri Hill, Katuna Road, Kabale, Uganda}}}
\date{}
\begin{document}

\maketitle

\begin{abstract}
We investigate when a single linear sketch can simultaneously control the largest and smallest nonzero singular values of every rank-$r$ matrix. Classical oblivious subspace embeddings of size $s = \Theta(r/\varepsilon^{2})$ guarantee $(1\pm\varepsilon)$ distortion of all vectors in an $r$-dimensional subspace, but they do not provide uniform, constant-factor control of the extreme singular values or the resulting condition numbers. We formalize a \emph{Single-Sketch Extreme-Values Conjecture} predicting that sketch dimension $s = O(r\log r)$ suffices for constant-factor preservation.

We take two steps toward this conjecture. On the constructive side, we show that composing a sparse oblivious embedding with a deterministic geometric balancing map in the sketch space yields a sketch $S$ for which all nonzero singular values of $SA$ collapse to a common scale, provided the input matrix has bounded condition number and low coherence. Under these assumptions, the resulting sketch achieves constant-factor control of the extreme singular values and produces a perfectly conditioned representation of the column space. On the negative side, we prove that any oblivious sketch that preserves all nonzero singular values within relative error $\varepsilon$ for every rank-$r$ matrix must satisfy the Johnson--Lindenstrauss type lower bound 
\[
s = \Omega\!\left(\frac{r + \log(1/\delta)}{\varepsilon^{2}}\right),
\]
showing that strong, uniform guarantees necessarily require quadratic dependence on $1/\varepsilon$.

Numerical experiments on structured matrix families (including low-coherence and moderately conditioned inputs) demonstrate that the proposed balancing scheme consistently improves conditioning and accelerates downstream iterative methods, while coherent or nearly rank-deficient matrices exhibit the predicted failure modes. Together, these results clarify the gap between what is conjecturally possible and what is provably necessary for single-sketch extreme-value preservation.

{\bf{Keywords:}}
linear sketching, oblivious subspace embeddings, extreme singular values, condition number, matrix coherence; dimensionality reduction; randomized numerical linear algebra; sparse embeddings; balancing transforms; Johnson–Lindenstrauss lower bounds.

\vspace{1em}

{\bf{MSCcodes:}}
68W20, 65F35, 65F22, 15A18, 68Q25, 60B20
\end{abstract}

\section{Introduction}
Approximating the singular spectra of large matrices through small random projections is a central task in numerical linear algebra, data compression, and machine learning. Randomized sketching methods, including oblivious subspace embeddings (OSE) and randomized low-rank approximation, provide strong guarantees for average-case preservation of subspace geometry \cite{Woodruff2014Sketching}. Such sketches typically preserve the norms of all vectors in an \(r\)-dimensional subspace up to a factor \(1 \pm \varepsilon\) using a dimension \(s = O(r/\varepsilon^{2})\), while supporting multiplication in input-sparsity time \cite{ClarksonWoodruff2013InputSparsity,HalkoMartinssonTropp2011Randomized}. Importance sampling approaches further refine these guarantees by focusing on leverage scores, which sharpen control of directions that dominate the subspace geometry \cite{DrineasMagdonIsmailMahoneyWoodruff2012Leverage}.

These results emphasize global norm preservation or average geometric distortion, but many applications depend critically on the behavior of the \emph{extreme} singular values. Conditioning, numerical stability, and robustness of downstream computations are all governed by the largest and smallest singular values of a matrix or its sketch. CountSketch-type maps and modern sparse embeddings achieve near-optimal scaling in \(r\) and \(\varepsilon\) for norm preservation \cite{CharikarChenFarachColton2004CountSketch,NelsonNguyen2013OSNAP}, and recent work attains optimal embedding dimension with nearly optimal sparsity \cite{ChenakkodDerezinskiDong2024OptimalOSE}. Advances in conditioned low-rank approximation \cite{DerezinskiMahoney2021SharpRidge}, rectangular Johnson–Lindenstrauss transforms \cite{SohlerWoodruff2020RectangularJL}, and robust sketching under spectral decay or coherence constraints \cite{DerezinskiWarmuth2020SparsePCA,Derezinski2021AdversarialSketching} all highlight regimes in which the largest and smallest singular values become decisive.

What remains unclear is whether one can preserve both extremes of the spectrum for \emph{every} rank \(r\) matrix using a \emph{single oblivious sketch} with only \(s = O(r \log r)\) rows. This question motivates the following conjecture.

\medskip
\noindent
\textbf{Conjecture (Single Sketch Extreme Values).}
\emph{There exists an oblivious sketching matrix \(S \in \mathbb{R}^{s \times m}\) with \(s = O(r\log r)\) such that for all rank-\(r\) matrices \(A\)},
\[
c_{1}\,\sigma_{\max}(A) \le \sigma_{\max}(SA) \le C_{1}\,\sigma_{\max}(A), 
\qquad 
c_{2}\,\sigma_{\min}(A) \le \sigma_{\min}(SA) \le C_{2}\,\sigma_{\min}(A),
\]
\emph{for universal constants \(c_i,C_i>0\).}

A positive resolution would extend stable embedding and conditioned low-rank approximation results \cite{DerezinskiMahoney2021SharpRidge,SohlerWoodruff2020RectangularJL} by showing that one small oblivious sketch can preserve the condition number of \emph{any} rank-\(r\) matrix up to constant factors. A negative resolution would expose a fundamental gap between subspace norm preservation and simultaneous control of extreme singular values, strengthening recent impossibility results in adversarial and high-coherence regimes \cite{Derezinski2021AdversarialSketching,AndoniNguyenRazenshteyn2020LowerBounds}.

\subsection*{Contributions}

We address this conjecture through complementary constructive and impossibility results and connect these findings to conditioning and solver performance in scientific computing.

\begin{enumerate}

\item \textit{Formalization and implications of the conjecture.}
We formalize the Single Sketch Extreme Values conjecture and clarify its implications for condition number preservation and stability of sketched linear systems. Our analysis shows how a positive resolution would imply uniform constant-factor control of \(\kappa(SA)\) for all rank-\(r\) matrices and relates this behaviour to recent work on stable JL embeddings, sketch-based preconditioning, and controlled spectral distortion \cite{AndoniSaeed2022SpectralSketching}.

\item \textit{Complementary theoretical results in constructive and oblivious regimes.}
In a constructive setting we introduce a scheme that couples a CountSketch-type transform with a deterministic balancing operator in the sketch space. Under structural conditions such as bounded coherence or a modest spectral gap, which are known to support refined guarantees in modern sketching and randomized numerical linear algebra \cite{DerezinskiWarmuth2020SparsePCA,Derezinski2020ImprovedCoherence}, we prove constant-factor control of all nonzero singular values of the sketched matrix, producing a perfectly conditioned representation.  

In contrast, in a fully oblivious setting we build an adversarial family of rank-\(r\) matrices and show that no sketch with \(s = O(r\log r)\) rows can maintain constant-factor control of both the largest and smallest singular values for every matrix in this family. The proof reduces the desired extreme-value control to a standard OSE requirement and then applies sharp adversarial lower bounds \cite{Derezinski2021AdversarialSketching,AndoniNguyenRazenshteyn2020LowerBounds}, giving a clear separation between what can be achieved via data-dependent balancing and what a single oblivious sketch can guarantee.

\item \textit{Proof techniques and numerical illustration.}
We develop concise proofs that highlight the geometric structure underlying both results and complement the theory with numerical experiments. These experiments show how the constructive scheme behaves under varying coherence and spectral profiles and how the impossibility phenomenon manifests on adversarial families. We also observe runtime and stability trends aligned with empirical studies of sketch-based conditioning \cite{MuscoMusco2020SpectrumApprox}, indicating how the proposed balancing operator can be incorporated into iterative solvers and scientific-computing pipelines.

\end{enumerate}

This perspective isolates the role of extreme singular values in single-pass sketching, delineates the boundary between data-dependent and fully oblivious constructions, and provides geometric and algorithmic insights relevant to sketched preconditioning and robust numerical linear algebra.

\subsection{Preliminaries and related work}

Let \(A \in \mathbb{R}^{m \times n}\) have rank \(r\), and let \(S \in \mathbb{R}^{s \times m}\) be a random sketching matrix. A distribution over sketching matrices is an \emph{oblivious subspace embedding} (OSE) if, with high probability,
\[
(1-\varepsilon)\,\|Ax\|_{2}
\le
\|SAx\|_{2}
\le
(1+\varepsilon)\,\|Ax\|_{2}
\quad
\text{for all }x\in\mathbb{R}^{n}\text{ with }Ax\in\mathrm{range}(A).
\]
Dense Gaussian embeddings provide a first example of such distributions and already achieve optimal dimension up to constants \cite{Woodruff2014Sketching}. Subsampled randomized Hadamard transforms extend this to fast structured maps \cite{Tropp2011SRHT}, while CountSketch and OSNAP give sparse constructions with near input sparsity multiplication \cite{CharikarChenFarachColton2004CountSketch,NelsonNguyen2013OSNAP}. These families all achieve the OSE guarantee with sketch dimension
\[
s = O\!\left( \frac{r}{\varepsilon^{2}} \right),
\]
and, when combined with streaming or input-sparsity time multiplication techniques, give the now classical toolkit for oblivious sketching in numerical linear algebra \cite{ClarksonWoodruff2013InputSparsity,Woodruff2014Sketching}. The resulting bounds preserve the geometry of the entire embedded subspace to relative error \(\varepsilon\).

The present work focuses on a more refined question. Rather than controlling the full spectrum within a relative error band as in the standard OSE literature \cite{Woodruff2014Sketching,NelsonNguyen2014LowerOSE}, we ask whether a sketch with dimension \(s = O(r \log r)\) can preserve only the \emph{extreme} singular values of a rank \(r\) matrix within fixed universal constants. Formally, we seek
\[
c_{1}\,\sigma_{\max}(A)
\le
\sigma_{\max}(SA)
\le
C_{1}\,\sigma_{\max}(A),
\qquad
c_{2}\,\sigma_{\min}(A)
\le
\sigma_{\min}(SA)
\le
C_{2}\,\sigma_{\min}(A),
\]
for constants \(c_{i},C_{i}>0\) that do not depend on \(A\). This constant factor regime removes the small parameter \(\varepsilon\) and raises the possibility that the embedding dimension may fall below the classical \(O(r/\varepsilon^{2})\) barrier while still capturing the behaviour of the largest and smallest singular values.

The lower bounds of Nelson and Nguy{\~e}n show that any OSE with failure probability \(\delta<1/3\) and \((1\pm\varepsilon)\) relative error on all vectors in an \(r\) dimensional subspace must satisfy
\[
s \ge \Omega\!\left( \frac{r + \log(1/\delta)}{\varepsilon^{2}} \right),
\]
which matches known subspace embedding upper bounds up to constants \cite{NelsonNguyen2014LowerOSE}. In related work, Li and Woodruff obtain tight bounds for operator norm and Schatten norm estimation that also require sketch dimension scaling quadratically with \(1/\varepsilon\) when one wishes to approximate the entire spectrum to relative accuracy \cite{LiWoodruff2016TightBounds}. Together, these results demonstrate that uniform \((1\pm\varepsilon)\) approximation of all singular values is impossible when \(s = O(r \log r)\). They do not, however, determine whether one can still achieve constant factor control of only the largest and smallest singular values in this lower dimension regime.

Our results clarify this question. We identify a sharp separation between settings where constant factor preservation of extremes is achievable and settings where it is provably impossible. Under explicit structural assumptions such as bounded coherence or a mild spectral gap, a CountSketch transform \cite{CharikarChenFarachColton2004CountSketch,NelsonNguyen2013OSNAP} followed by a deterministic spectral balancing operator yields constant factor control of all nonzero singular values and produces a perfectly conditioned sketch. In contrast, in the fully oblivious setting, any sketch with \(s = O(r \log r)\) fails on an explicit adversarial family of rank \(r\) matrices once one requires simultaneous control of both extremes, a failure that follows by reduction to the OSE lower bounds \cite{NelsonNguyen2014LowerOSE,LiWoodruff2016TightBounds}. This establishes a clear dichotomy between what can be accomplished by data dependent balancing and what is possible for a single oblivious sketch.

\subsection{Positioning within existing work}

Classical oblivious subspace embeddings focus on preserving the geometry of an entire \(r\) dimensional subspace to relative error \(\varepsilon\) and are now well understood through tight upper and lower bounds. Dense Gaussian maps serve as the simplest example and already achieve optimal dimension up to constants \cite{Woodruff2014Sketching}. Fast structured transforms such as the subsampled randomized Hadamard transform \cite{Tropp2011SRHT} and sparse embeddings including CountSketch and OSNAP \cite{CharikarChenFarachColton2004CountSketch,NelsonNguyen2013OSNAP} extend these ideas to settings where input-sparsity time is essential \cite{ClarksonWoodruff2013InputSparsity}. Nelson and Nguy{\~e}n show that these upper bounds are essentially optimal by proving that any linear map that preserves all norms in an \(r\) dimensional subspace to relative error \(\varepsilon\) and failure probability at most \(\delta\) must have \(s = \Theta((r+\log(1/\delta))/\varepsilon^{2})\) rows \cite{NelsonNguyen2014LowerOSE}. This line of work fully characterises the classical \((1\pm\varepsilon)\) regime for subspace embeddings.

A separate body of work develops sketch based preconditioning and isotropic constructions for numerical linear algebra. These methods often start from an oblivious sketch and then apply data dependent steps such as leverage score sampling \cite{DrineasMagdonIsmailMahoneyWoodruff2012Leverage} or ridge regularisation \cite{DerezinskiMahoney2021SharpRidge} to control conditioning for least squares and low rank approximation. Rectangular JL transforms provide further tools to construct sketches with controlled distortion for rectangular matrices \cite{SohlerWoodruff2020RectangularJL}. Recent results on sparse principal component analysis and coherence aware sketching show how structural properties such as spectral decay or bounded coherence can sharpen guarantees beyond the worst case \cite{DerezinskiWarmuth2020SparsePCA,Derezinski2020ImprovedCoherence}. Stable spectral sketches developed by Andoni and Saeed give another viewpoint on sketching with explicit operator norm control \cite{AndoniSaeed2022SpectralSketching}. The guarantees in these works typically control either the entire spectrum up to a relative error parameter or an effective condition number that depends on spectral decay, coherence, or regularisation strength.

Our work differs from these directions in two related aspects. First, we shift attention from global relative error control of all singular values to constant factor control of only the largest and smallest singular values of rank \(r\) matrices. This extreme value viewpoint is motivated by conditioning and stability considerations and leads naturally to the Single Sketch Extreme Values conjecture introduced above. Second, we separate data dependent and fully oblivious regimes in a precise way. On the one hand, we show that a CountSketch based construction followed by a deterministic balancing operator in the sketch space achieves constant factor control of all nonzero singular values under explicit structural assumptions such as bounded coherence or a mild spectral gap \cite{DerezinskiWarmuth2020SparsePCA,Derezinski2020ImprovedCoherence}. On the other hand, we prove that any linear sketch with \(s = O(r \log r)\) that is fixed independently of the input cannot achieve such constant factor control of both extremes for an explicit adversarial family of rank \(r\) matrices, by reducing to known lower bounds for oblivious embeddings and spectral estimation \cite{NelsonNguyen2014LowerOSE,LiWoodruff2016TightBounds,Derezinski2021AdversarialSketching,AndoniNguyenRazenshteyn2020LowerBounds}.

In this way, the present paper does not introduce a new general notion of subspace embedding and does not seek to improve the optimal \((1\pm\varepsilon)\) theory. Rather, it isolates a specific constant factor extreme value regime, characterises when it can be realised by a single sketched representation under structural conditions, and shows that there is a fundamental gap between what data dependent balancing can accomplish and what is possible for fully oblivious sketches of size \(s = O(r \log r)\).

\section{Main Theorems}
\noindent
This section presents our two central results. The first provides a constructive sketching scheme that attains constant-factor control of all nonzero singular values under mild structural assumptions. The second establishes a matching lower bound, showing that no oblivious sketch of size $O(r\log r)$ can achieve uniform control of both extremes for all rank-$r$ matrices.

\subsection{Constructive Route}

We begin with geometric intuition.  
A standard sparse subspace embedding \(\Phi\) preserves the shape of the column space of \(A\) but may distort the spread between its largest and smallest singular values.  
Our idea is to apply a deterministic \emph{balancing operator} in the sketch space that rescales the embedded directions using the geometric mean of the sketched singular values.  
This operation forces the sketched matrix to be isotropic, its singular values become identical, while keeping a provable relation to those of the original matrix.  
Hence, the resulting sketch \(S\) simultaneously offers controlled distortion and perfect conditioning, which makes it particularly useful for numerical solvers.

\begin{theorem}[Geometric balancing sketch]
\label{thm:constructive-balanced-final}
Let \(A \in \mathbb{R}^{m \times n}\) have rank \(r\) and condition number
\[
\kappa(A) := \frac{\sigma_{1}(A)}{\sigma_{r}(A)} \le \kappa_{0}
\]
for some fixed \(\kappa_{0} \ge 1\).  
Fix \(\varepsilon \in (0,1/4)\) and \(\delta \in (0,1/10)\).  

Let \(\Phi \in \mathbb{R}^{s \times m}\) be a sparse oblivious subspace embedding (for example, an OSNAP or CountSketch transform) with
\[
s \ge C\,\varepsilon^{-2}\,r \log(r/\delta)
\]
for a sufficiently large absolute constant \(C>0\).  
Form
\[
B := \Phi A \in \mathbb{R}^{s \times n}
\]
and compute its thin singular value decomposition
\[
B = \widetilde{U}\,\widetilde{\Sigma}\,\widetilde{V}^{\top},
\quad
\widetilde{\Sigma} = \mathrm{diag}(\widetilde{\sigma}_{1},\dots,\widetilde{\sigma}_{r}),
\qquad
\widetilde{\sigma}_{1} \ge \dots \ge \widetilde{\sigma}_{r} > 0.
\]

Define the geometric mean
\[
\widetilde{g}
:=
\Bigl(\prod_{i=1}^{r} \widetilde{\sigma}_{i}\Bigr)^{1/r}
\]
and balancing weights
\[
\lambda_{i}
:=
\frac{\widetilde{g}}{\widetilde{\sigma}_{i}},
\qquad
i = 1,\dots,r.
\]
Set
\[
\Lambda := \mathrm{diag}(\lambda_{1},\dots,\lambda_{r}),
\qquad
W := \widetilde{U}\,\Lambda\,\widetilde{U}^{\top}
+ (I_{s} - \widetilde{U}\,\widetilde{U}^{\top}),
\]
and define the final sketching operator
\[
S := W \Phi \in \mathbb{R}^{s \times m}.
\]

Then, with probability at least \(1-\delta\) over the draw of \(\Phi\), the following hold.

\begin{enumerate}
\item \textbf{Constant-factor control and perfect conditioning.}  
For every \(i \in \{1,\dots,r\}\),
\[
\frac{1-\varepsilon}{\kappa_{0}}\,\sigma_{i}(A)
\;\le\;
\sigma_{i}(SA)
\;\le\;
(1+\varepsilon)\,\kappa_{0}\,\sigma_{i}(A),
\]
and \(\kappa(SA) = 1.\)

\item \textbf{Stability under structured perturbations.}  
Let \(E = U \Delta V^{\top}\) satisfy \(\|\Delta\|_{2} \le \eta\,\sigma_{r}(A)\) for some \(\eta \in (0,1)\), where \(A = U \Sigma V^{\top}\) is the thin singular value decomposition.  
Define \(\widehat{A} := A + E\).  
Using the same sketch \(S\), we have
\[
\bigl|\sigma_{i}(S\widehat{A}) - \sigma_{i}(SA)\bigr|
\;\le\;
L\,\eta\,\sigma_{r}(A),
\qquad
i \in \{1,\dots,r\},
\]
where
\[
L := \frac{(1+\varepsilon)^{2}}{1-\varepsilon}\,\kappa_{0}.
\]
\end{enumerate}
\end{theorem}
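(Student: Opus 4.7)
My plan is to handle both parts by first reducing to the spectrum of $B = \Phi A$ via the OSE guarantee and then exploiting the explicit algebraic form of $W$. The entry point is the standard consequence of an OSE of dimension $s \gtrsim \varepsilon^{-2} r\log(r/\delta)$: with probability at least $1-\delta$, the map $\Phi$ restricted to $\mathrm{range}(A)$ is a $(1\pm\varepsilon)$-isometry, and via the Courant--Fischer characterization this upgrades at once to singular-value preservation $\tilde{\sigma}_i \in [(1-\varepsilon)\sigma_i(A),\,(1+\varepsilon)\sigma_i(A)]$ for every $i$, and to the operator bound $\|\Phi M\|_2 \le (1+\varepsilon)\|M\|_2$ for any matrix $M$ whose column space is contained in $\mathrm{range}(A)$. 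Both statements hold under a single draw of $\Phi$, and will feed the two parts of the theorem.

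For part (1), the key algebraic identity is $W\tilde{U} = \tilde{U}\Lambda$, which follows because $(I_s - \tilde{U}\tilde{U}^{\top})\tilde{U} = 0$ and $\tilde{U}^{\top}\tilde{U} = I_r$. Consequently $SA = WB = \tilde{U}(\Lambda\tilde{\Sigma})\tilde{V}^{\top}$, and the defining choice $\lambda_i = \tilde{g}/\tilde{\sigma}_i$ collapses $\Lambda\tilde{\Sigma}$ to $\tilde{g}\,I_r$. Hence $SA = \tilde{g}\,\tilde{U}\tilde{V}^{\top}$, whose nonzero singular values all equal $\tilde{g}$, giving $\kappa(SA)=1$ immediately. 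To finish, I will pinch $\tilde{g}$ between constant multiples of $\sigma_i(A)$: the geometric mean $g_A := (\prod_i \sigma_i(A))^{1/r}$ obeys $\sigma_r(A) \le g_A \le \sigma_1(A)$, the OSE step multiplies each factor by something in $[1-\varepsilon,\,1+\varepsilon]$ so $\tilde{g} \in [(1-\varepsilon)g_A,\,(1+\varepsilon)g_A]$, and the condition-number hypothesis $\sigma_1(A) \le \kappa_0\sigma_r(A)$ together with $\sigma_r(A) \le \sigma_i(A) \le \sigma_1(A)$ supplies the missing $\kappa_0$ in both directions.

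For part (2), I would invoke Weyl's inequality for singular values, $|\sigma_i(S\widehat{A}) - \sigma_i(SA)| \le \|SE\|_2$, and factor $S = W\Phi$ so that $\|SE\|_2 \le \|W\|_2\,\|\Phi E\|_2$. Since $E = U\Delta V^{\top}$ has column space contained in $\mathrm{range}(A)$, the matrix-wise OSE bound from the first paragraph yields $\|\Phi E\|_2 \le (1+\varepsilon)\|E\|_2 = (1+\varepsilon)\|\Delta\|_2$. The operator norm of $W$ decomposes along the orthogonal pair $\mathrm{range}(\tilde{U}) \oplus \mathrm{range}(\tilde{U})^{\perp}$, giving $\|W\|_2 = \max(\max_i \lambda_i,\,1) = \tilde{g}/\tilde{\sigma}_r$, and the bounds $\tilde{g} \le (1+\varepsilon)\sigma_1(A)$ and $\tilde{\sigma}_r \ge (1-\varepsilon)\sigma_r(A)$ from part (1) yield $\|W\|_2 \le (1+\varepsilon)\kappa_0/(1-\varepsilon)$. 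Multiplying the two factors and using $\|\Delta\|_2 \le \eta\sigma_r(A)$ produces exactly $L = (1+\varepsilon)^{2}\kappa_0/(1-\varepsilon)$.

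The only delicate point I anticipate is arranging the singular-value preservation of $B$ and the operator-norm bound $\|\Phi E\|_2 \le (1+\varepsilon)\|\Delta\|_2$ under the \emph{same} $(1-\delta)$-event on $\Phi$; a naive reading would apply the OSE guarantee twice and pay an unnecessary union bound. Both facts are, however, consequences of the single property that $\Phi$ is a $(1\pm\varepsilon)$-isometry on $\mathrm{range}(A)$, so once this is recorded correctly the remainder of the argument is symbolic. The balancing weights $\lambda_i$ are engineered precisely so that $\Lambda\tilde{\Sigma}$ collapses to a scalar, and everything else reduces to tracking how $\tilde{g}$ sits between $\sigma_r(A)$ and $\sigma_1(A)$ modulo the factor $\kappa_0$.
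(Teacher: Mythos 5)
Your proposal is correct and follows essentially the same route as the paper's proof: OSE singular-value preservation for $B=\Phi A$, the identity $SA=\widetilde{g}\,\widetilde U\widetilde V^{\top}$ collapsing the spectrum, the geometric-mean pinching via $\kappa_0$, and Weyl plus $\|SE\|_2\le\|W\|_2\|\Phi U\|_2\|\Delta\|_2$ for part (2). Your explicit justification of $\|W\|_2=\widetilde g/\widetilde\sigma_r$ and your remark that both OSE consequences live on a single $(1-\delta)$ event are slightly more careful than the paper's "by construction" shorthand, but the argument is the same.
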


\begin{proof}
Let \(A = U \Sigma V^{\top}\) be the thin SVD, with orthonormal \(U,V\) and diagonal \(\Sigma = \mathrm{diag}(\sigma_{1}(A),\dots,\sigma_{r}(A))\).  
The bound \(\kappa(A)\le\kappa_{0}\) gives \(\sigma_{1}(A)/\sigma_{r}(A)\le\kappa_{0}\).

\emph{Step 1. Subspace embedding for the column space.}  
By the guarantees of sparse embeddings \cite[Theorem 1.1]{NelsonNguyen2013OSNAP,Woodruff2014Sketching}, if  
\(s \ge C\,\varepsilon^{-2}\,r\log(r/\delta)\), then with probability at least \(1-\delta\),
\[
(1-\varepsilon)\|x\|_{2}
\le
\|\Phi Ux\|_{2}
\le
(1+\varepsilon)\|x\|_{2}
\quad
\forall\,x\in\mathbb{R}^{r}.
\]
Hence, every singular value of \(\Phi U\) lies in \([1-\varepsilon,1+\varepsilon]\).

\emph{Step 2. Bounds for sketched singular values.}  
Since \(B = \Phi A = \Phi U \Sigma V^{\top}\), the singular values of \(B\) equal those of \(\Phi U \Sigma\).  
Therefore,
\[
(1-\varepsilon)\,\sigma_{i}(A)
\le
\widetilde{\sigma}_{i}
\le
(1+\varepsilon)\,\sigma_{i}(A)
\quad
\forall i.
\]
Let \(g := (\prod_{i}\sigma_{i}(A))^{1/r}\) and \(\widetilde{g} := (\prod_{i}\widetilde{\sigma}_{i})^{1/r}\).  
Then \((1-\varepsilon)g \le \widetilde{g} \le (1+\varepsilon)g\).  
Since \(\sigma_{r}(A)\le g\le\sigma_{1}(A)\) and \(\kappa(A)\le\kappa_{0}\), we have
\[
\frac{1}{\kappa_{0}}
\le
\frac{g}{\sigma_{i}(A)}
\le
\kappa_{0}
\quad\Rightarrow\quad
\frac{1-\varepsilon}{\kappa_{0}}
\le
\frac{\widetilde{g}}{\sigma_{i}(A)}
\le
(1+\varepsilon)\kappa_{0}.
\]

\emph{Step 3. Effect of balancing operator.}  
Since \(SA = W\Phi A = W B = \widetilde{U}\Lambda\widetilde{\Sigma}\widetilde{V}^{\top}\),  
the nonzero singular values of \(SA\) are \(\lambda_{i}\widetilde{\sigma}_{i} = \widetilde{g}\) for all \(i\).  
Hence, all singular values of \(SA\) equal \(\widetilde{g}\) and \(\kappa(SA)=1\).  
Combining this with the previous bounds gives
\[
\frac{1-\varepsilon}{\kappa_{0}}\,\sigma_{i}(A)
\le
\sigma_{i}(SA)
\le
(1+\varepsilon)\,\kappa_{0}\,\sigma_{i}(A),
\]
establishing item 1.

\emph{Step 4. Structured perturbation stability.}  
Let \(E=U\Delta V^{\top}\) with \(\|\Delta\|_{2}\le\eta\,\sigma_{r}(A)\) and \(\widehat{A}=A+E\).  
Then \(S\widehat{A}-SA = SE = W\Phi U\Delta V^{\top}\), so
\[
\|SE\|_{2}
\le
\|W\|_{2}\,\|\Phi U\|_{2}\,\|\Delta\|_{2}.
\]
By Step 1, \(\|\Phi U\|_{2}\le1+\varepsilon\); by construction,  
\(\|W\|_{2}\le\frac{1+\varepsilon}{1-\varepsilon}\kappa_{0}\).  
Thus,
\[
\|SE\|_{2}
\le
\frac{(1+\varepsilon)^{2}}{1-\varepsilon}\,\kappa_{0}\,\eta\,\sigma_{r}(A).
\]
Weyl’s inequality then gives
\[
\bigl|\sigma_{i}(S\widehat{A})-\sigma_{i}(SA)\bigr|
\le
\frac{(1+\varepsilon)^{2}}{1-\varepsilon}\,\kappa_{0}\,\eta\,\sigma_{r}(A),
\]
proving item 2 with \(L=\frac{(1+\varepsilon)^{2}}{1-\varepsilon}\kappa_{0}\).
\end{proof}

\begin{remark}
\label{rem:balanced-interpretation}
The assumption \(\kappa(A)\le\kappa_{0}\) is used only to make constants explicit and can be ensured by mild preprocessing such as column scaling or a QR step, consistent with recent advances in sketch-based preconditioning and stable randomized linear algebra \cite{PengVempala2021Preconditioning,ClarksonEtAl2017LowPrecision}.  
Theorem~\ref{thm:constructive-balanced-final} extends standard oblivious subspace embedding results by introducing a deterministic balancing map \(W\) that enforces isotropy in the sketch space, echoing ideas from modern stable embedding frameworks and spectral conditioning methods \cite{AndoniSaeed2022SpectralSketching, DerezinskiWarmuth2020IsotropicSketching}.  
The resulting sketch \(SA\) is perfectly conditioned and provably stable to structured perturbations, properties not captured by ordinary OSE guarantees and aligned with recent efforts to build sketches with explicit condition-number control \cite{SohlerWoodruff2020RectangularJL}.
\end{remark}

\begin{corollary}[Implications for Krylov solvers]\label{cor:krylov}
Assume the setting of Theorem~\ref{thm:balancing} and suppose that $A$ is used inside a Krylov method based on matrix vector products, such as LSQR for least squares or conjugate gradients when $A$ is square and positive definite. Consider the sketched system
\[
\min_x \|SAx - Sb\|_2,
\]
and apply the same Krylov method to this system.

Then with probability at least $1-\delta$ over the draw of the sparse embedding $\Phi$, the following holds.  
\begin{enumerate}
\item The effective condition number of $SA$ satisfies
\[
\kappa(SA) \le (1+\varepsilon)^2 \kappa_0^2,
\]
and in particular is bounded by an absolute constant once $\kappa_0$ is fixed.
\item The number of Krylov iterations required to reduce the residual norm by a factor $\varepsilon_{\mathrm{tol}} \in (0,1)$ is bounded by
\[
k \le C \log\!\left(\frac{1}{\varepsilon_{\mathrm{tol}}}\right),
\]
where $C$ depends only on $\varepsilon$ and $\kappa_0$ and is independent of $\kappa(A)$.
\end{enumerate}
In other words, once $SA$ has been formed, the convergence rate of standard Krylov solvers on the sketched system is controlled by a constant that does not grow with the condition number of $A$ itself.
\end{corollary}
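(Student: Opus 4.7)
The plan is to reduce both parts of the corollary directly to Theorem~\ref{thm:constructive-balanced-final} and then invoke standard Krylov convergence bounds on the sketched system. For item~1, I would apply the singular-value inequalities of Theorem~\ref{thm:constructive-balanced-final}: with probability at least $1-\delta$, every nonzero singular value of $SA$ lies in the window $[\frac{1-\varepsilon}{\kappa_0}\sigma_i(A),\,(1+\varepsilon)\kappa_0\sigma_i(A)]$. Ratioing the largest upper bound against the smallest lower bound and using $\kappa(A)\le\kappa_0$ gives $\kappa(SA)\le\frac{1+\varepsilon}{1-\varepsilon}\kappa_0^{3}$, which for $\varepsilon<1/4$ is absorbed into the stated $(1+\varepsilon)^{2}\kappa_{0}^{2}$ by trivial slack. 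In fact Theorem~\ref{thm:constructive-balanced-final} already delivers the sharper identity $\kappa(SA)=1$, so the corollary's bound is immediate with room to spare; I would remark on this and then retain the loose constant bound since that is what drives the iteration count.

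For item~2, I would invoke the classical Chebyshev-type analysis of Krylov subspace methods. When $SA$ is symmetric positive definite and CG is applied, one has
\[
\|e_k\|_{SA}\le 2\left(\frac{\sqrt{\kappa(SA)}-1}{\sqrt{\kappa(SA)}+1}\right)^{k}\|e_0\|_{SA},
\]
while for LSQR applied to $\min_x\|SAx-Sb\|_2$ the residual history coincides with that of CG on the normal equations, with contraction governed by $\kappa((SA)^{\top}SA)=\kappa(SA)^{2}$. Substituting the constant bound from item~1 produces a contraction factor $\rho=\rho(\varepsilon,\kappa_{0})\in(0,1)$ that depends only on $\varepsilon$ and $\kappa_{0}$. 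Solving $\rho^{k}\le\varepsilon_{\mathrm{tol}}$ then yields $k\le C(\varepsilon,\kappa_{0})\log(1/\varepsilon_{\mathrm{tol}})$ iterations, independent of $\kappa(A)$.

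The argument is essentially bookkeeping once Theorem~\ref{thm:constructive-balanced-final} is in hand; the only point worth spelling out, rather than a genuine obstacle, is that the Krylov contraction rate for the sketched problem depends on the spectrum of $SA$ alone and not on the particular right-hand side $Sb$ or on $\kappa(A)$. This is standard content of the Chebyshev bound, but it is precisely the decoupling from $\kappa(A)$ that the corollary advertises as the payoff of the balancing step, so it deserves an explicit sentence in the write-up.
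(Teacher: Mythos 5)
Your proposal follows essentially the same route as the paper: read off the singular-value window and the identity $\kappa(SA)=1$ from the balancing theorem, then feed the resulting constant condition number into the standard Chebyshev/CG--LSQR iteration bound. One intermediate assertion is wrong, though it does not sink the argument: the ratio of the extreme window endpoints gives $\kappa(SA)\le\frac{1+\varepsilon}{1-\varepsilon}\,\kappa_0^{2}\,\kappa(A)\le\frac{1+\varepsilon}{1-\varepsilon}\,\kappa_0^{3}$, and this is \emph{not} ``absorbed into'' $(1+\varepsilon)^{2}\kappa_0^{2}$ by slack --- indeed $\frac{1+\varepsilon}{1-\varepsilon}>(1+\varepsilon)^{2}$ and $\kappa_0^{3}\ge\kappa_0^{2}$, so the window-ratio bound is strictly weaker than the corollary's claim. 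You (like the paper) are rescued only because the theorem actually forces $\kappa(SA)=1$, which makes the stated bound trivially true; it would be cleaner to drop the window-ratio computation entirely and state that item~1 follows from $\kappa(SA)=1\le(1+\varepsilon)^{2}\kappa_0^{2}$, with the window bounds needed only to relate the common singular value of $SA$ back to the spectrum of $A$.
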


\begin{proof}
Theorem~\ref{thm:balancing} bounds every nonzero singular value of $SA$ between constant multiples of $\sigma_i(A)$ and enforces $\kappa(SA) = 1$. The explicit dependence on $\kappa_0$ in the constants yields the stated bound on $\kappa(SA)$. Standard convergence theory for LSQR and conjugate gradients shows that the iteration count needed to reach tolerance $\varepsilon_{\mathrm{tol}}$ is proportional to a logarithmic factor whose constant depends on the condition number of the coefficient matrix. Substituting the bound on $\kappa(SA)$ gives the claim.
\end{proof}

\subsection{Impossibility Route}

The next theorem shows that no oblivious linear sketch with dimension below the classical \(r/\varepsilon^{2}\) barrier can preserve both extreme singular values and in fact all singular values up to a \((1\pm\varepsilon)\) factor for every rank \(r\) matrix.  
The proof reduces the claim to the known lower bounds for oblivious subspace embeddings and then interprets them geometrically in terms of Grassmannian packing.

\begin{theorem}[Oblivious lower bound for extreme singular values]
\label{thm:impossible-grassmann}
Fix \(\varepsilon\in(0,1/2)\) and \(\delta\in(0,1/3)\).  
Let \(\mathcal D\) be any distribution over sketching matrices \(S\in\mathbb R^{s\times m}\).  
Suppose that for every rank \(r\) matrix \(A\in\mathbb R^{m\times n}\),
\[
\mathbb P_{S\sim\mathcal D}\Big[
(1-\varepsilon)\,\sigma_i(A) \le \sigma_i(SA) \le (1+\varepsilon)\,\sigma_i(A)
\text{ for all } i=1,\dots,r
\Big] \;\ge\; 1-\delta.
\]
Then there is an absolute constant \(c_0>0\) such that
\[
s \;\ge\; c_0\,\frac{r + \log(1/\delta)}{\varepsilon^{2}}.
\]
In particular, no oblivious linear sketch with \(s = o(r/\varepsilon^{2})\) rows can achieve uniform \((1\pm\varepsilon)\) preservation of all singular values for every rank \(r\) matrix with constant success probability.
\end{theorem}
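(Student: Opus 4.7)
The plan is to reduce the claimed extreme-value guarantee to the classical oblivious subspace embedding lower bound of Nelson--Nguy{\~e}n, by observing that singular value preservation on a cleverly chosen family of rank-$r$ matrices forces $S$ to act as an OSE on every $r$-dimensional subspace of $\mathbb{R}^m$. Once this reduction is in place, the quantitative bound $s = \Omega((r + \log(1/\delta))/\varepsilon^2)$ is inherited directly from known results.

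First I would restrict attention to matrices with orthonormal columns. Fix an arbitrary matrix $U \in \mathbb{R}^{m \times r}$ with $U^{\top} U = I_r$, and take $A = U$ in the hypothesis. All singular values of $A$ equal $1$, so the assumption gives that with probability at least $1-\delta$ over $S\sim\mathcal{D}$, every singular value of $SU$ lies in $[1-\varepsilon,\,1+\varepsilon]$. Translating this back through the identity $\|SUx\|_2 = \|Sy\|_2$ for $y = Ux \in \mathrm{range}(U)$ (where $\|y\|_2 = \|x\|_2$ since $U$ has orthonormal columns), I obtain
\[
(1-\varepsilon)\,\|y\|_2 \;\le\; \|Sy\|_2 \;\le\; (1+\varepsilon)\,\|y\|_2 \qquad \text{for all } y \in \mathrm{range}(U),
\]
which is exactly the oblivious subspace embedding condition for the $r$-dimensional subspace $\mathrm{range}(U)$. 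Since $U$ was arbitrary, the distribution $\mathcal{D}$ is a universal $(1\pm\varepsilon)$ OSE for every $r$-dimensional subspace of $\mathbb{R}^m$ with failure probability at most $\delta$.

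Next I would invoke the Nelson--Nguy{\~e}n lower bound \cite{NelsonNguyen2014LowerOSE}, which asserts that any such universal OSE must use at least $s \ge c_0 (r + \log(1/\delta))/\varepsilon^{2}$ rows for a universal constant $c_0>0$. This immediately yields the stated inequality, and the asymptotic consequence $s = \omega(r/\varepsilon^2)$ is impossible follows by contradiction. To give the argument the geometric flavor advertised in the theorem's name, I would close by reinterpreting the reduction through the Grassmannian $\mathrm{Gr}(r,m)$: the requirement is that a single random $S$ must act isometrically, up to $(1\pm\varepsilon)$, on every point of $\mathrm{Gr}(r,m)$, and the $\varepsilon^{-2}$-dependence reflects the covering number (metric entropy) of the Grassmannian at scale $\varepsilon$, matched by the packing arguments used in the lower bound proofs of \cite{NelsonNguyen2014LowerOSE,LiWoodruff2016TightBounds}.

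The main obstacle is conceptual rather than technical: one must verify that preservation of singular values \emph{at all scales} for a single well-chosen family (orthonormal-column matrices) is already enough to replicate the full OSE hypothesis, so that no strengthening of the reduction is needed. Once this is observed, the computation collapses to a citation of a sharp existing bound, and there is no need to construct a separate adversarial family or to invoke spectral-norm estimation lower bounds. A minor check is that the rank-$r$ assumption in the hypothesis is compatible with taking $A = U$ (it is, since $U$ has rank exactly $r$) and that the probability statement transfers verbatim, which it does because the event in question is identical.
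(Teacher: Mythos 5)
Your proposal is correct and follows essentially the same route as the paper: both reduce the hypothesis to the oblivious subspace embedding condition by testing on rank-$r$ matrices whose nonzero singular values are all equal (the paper uses $A = \lambda UV^{\top}$, you use $A=U$, a cosmetic difference up to the factor $V^{\top}$ needed when $n \neq r$), and then cite the Nelson--Nguy{\~e}n lower bound.
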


\begin{proof}
We reduce to the oblivious subspace embedding lower bound of Nelson and Nguy{\~e}n.

First fix a rank \(r\) matrix \(A\) of the form
\[
A = U \Sigma V^{\top},
\quad
\Sigma = \lambda I_{r},
\]
where \(U\in\mathbb R^{m\times r}\) and \(V\in\mathbb R^{n\times r}\) have orthonormal columns and \(\lambda>0\).  
Then all nonzero singular values of \(A\) are equal to \(\lambda\).  
The hypothesis implies that for such \(A\),
\[
\mathbb P_{S\sim\mathcal D}\Big[
(1-\varepsilon)\,\lambda \le \sigma_i(SA) \le (1+\varepsilon)\,\lambda
\text{ for all } i=1,\dots,r
\Big] \;\ge\; 1-\delta.
\]

For any fixed \(S\) we have
\[
SA = S U \Sigma V^{\top} = (S U)\,\Sigma V^{\top}.
\]
Right multiplication by the orthogonal matrix \(V^{\top}\) does not change singular values and multiplication by \(\Sigma = \lambda I_{r}\) multiplies all nonzero singular values by \(\lambda\).  
Hence
\[
\sigma_i(SA) = \lambda\,\sigma_i(SU),
\quad
i=1,\dots,r.
\]
The inequalities for \(\sigma_i(SA)\) are therefore equivalent to
\[
(1-\varepsilon) \le \sigma_i(SU) \le (1+\varepsilon)
\quad
\text{for all } i=1,\dots,r
\]
with probability at least \(1-\delta\) over \(S\sim\mathcal D\).

Equivalently, for every orthonormal \(U\in\mathbb R^{m\times r}\),
\[
\mathbb P_{S\sim\mathcal D}\Big[
(1-\varepsilon)\,\|x\|_{2} \le \|SUx\|_{2} \le (1+\varepsilon)\,\|x\|_{2}
\text{ for all } x\in\mathbb R^{r}
\Big] \;\ge\; 1-\delta.
\]
This means that for every \(r\) dimensional subspace \(W\subseteq\mathbb R^{m}\), the random matrix \(S\) drawn from \(\mathcal D\) is an \(\varepsilon\) subspace embedding for \(W\) with failure probability at most \(\delta\).

This is exactly the definition of an oblivious subspace embedding with parameters \((m,r,\varepsilon,\delta)\) used in the lower bound of Nelson and Nguy{\~e}n.  
By their main result \cite[Theorem~3]{NelsonNguyen2014LowerOSE}, any such distribution \(\mathcal D\) must satisfy
\[
s \;\ge\; c_0\,\frac{r + \log(1/\delta)}{\varepsilon^{2}}
\]
for an absolute constant \(c_0>0\).  
This proves the claim.
\end{proof}

\begin{remark}
\label{rem:impossible-interpretation}
Theorem~\ref{thm:impossible-grassmann} shows that any oblivious sketch which preserves all nonzero singular values of every rank \(r\) matrix within relative error \(\varepsilon\) must in fact be an oblivious subspace embedding and therefore must satisfy the optimal Johnson–Lindenstrauss type lower bound on its row dimension \cite{NelsonNguyen2014LowerOSE}.  
In geometric terms, such a sketch would have to act as a near isometry on an exponentially large packing of \(r\) dimensional subspaces in the Grassmannian, which forces sketch dimension of order \((r+\log(1/\delta))/\varepsilon^{2}\).
\end{remark}

\section{Numerical experiments}

We now report numerical experiments that support the theoretical results and illustrate their impact on conditioning and iterative solvers.

\subsection{Structured and adversarial synthetic matrices}

We first validate the geometric balancing sketch on synthetic matrices that isolate the two regimes. The first family is designed to satisfy the low coherence and modest spectral decay assumptions under which Theorem~\ref{thm:constructive-balanced-final} predicts stable constant factor control. The second family is a highly coherent and nearly rank deficient construction that instantiates the worst case behaviour behind Theorem~\ref{thm:impossible-grassmann} and the lower bounds of Nelson and Nguy{\~e}n and related work on oblivious subspace embeddings \cite{NelsonNguyen2014LowerOSE,Woodruff2014Sketching,DrineasMahoney2016RandNLA,Vershynin2018HDP}.

\subsection*{Experimental setup}

We fix ambient dimensions \(m = 1500\), \(n = 400\), and target rank \(r = 20\). For each experiment we choose three sketch sizes \(s \in \{90, 180, 300\}\) which are all on the order of \(r\log r\) and hence match the theoretically relevant regime. For every triple consisting of a matrix type and a sketch size we perform twenty independent trials with fresh draws of the sketch and the underlying random components of the matrix.

For the structured family we draw orthonormal factors \(U \in \mathbb{R}^{m\times r}\) and \(V \in \mathbb{R}^{n\times r}\) with independent Gaussian entries followed by QR, then set
\[
A_{\text{good}} \;=\; U \,\mathrm{diag}(\sigma_{1},\dots,\sigma_{r})\, V^{\top},
\qquad
\sigma_{i} = 1 + 0.2\,\frac{r+1-i}{r},
\]
which yields low coherence and a gently decaying spectrum. For the adversarial family we use a coherent left factor and an almost singular spectrum
\[
A_{\text{bad}} \;=\; U_{\text{coh}}\, \mathrm{diag}(1,\dots,1,10^{-9})\, V^{\top},
\]
where \(U_{\text{coh}}\) has its first column concentrated near the first standard basis vector and the remaining columns are orthogonalised random directions, while \(V\) is again drawn as an orthonormal Gaussian matrix. This \(A_{\text{bad}}\) has condition number \(10^{9}\) and high leverage on a single coordinate, which is a canonical stress test for singular value preservation in sketching algorithms \cite{Woodruff2014Sketching,DrineasMahoney2016RandNLA}.

We compare three sketching schemes. The first is a dense Gaussian sketch with entries \(S_{ij} \sim \mathcal{N}(0,1/s)\). The second is a CountSketch matrix with one \(\pm1\) entry per column. Both are standard oblivious subspace embeddings \cite{NelsonNguyen2014LowerOSE,Woodruff2014Sketching}. The third is the proposed random log clipped balancing (RLCB) sketch. RLCB first applies a CountSketch, then computes the thin SVD of \(B = \Phi A\), takes the top \(r\) singular values, and constructs a diagonal scaling in log space with trimming and clipping. This yields a balancing map \(W\) that enforces near isotropy in the sketch space as described in Theorem~\ref{thm:constructive-balanced-final}. The full sketch operator is \(S = W\Phi\).

For each sketch \(S\) and matrix \(A\) we form \(SA\), compute its top \(r\) singular values, and record
\[
\mathrm{rel}_{\max} = \frac{\sigma_{\max}(SA)}{\sigma_{\max}(A)},
\qquad
\mathrm{rel}_{\min} = \frac{\sigma_{\min}(SA)}{\sigma_{\min}(A)},
\qquad
\mathrm{cond\_ratio} = \frac{\kappa(SA)}{\kappa(A)}.
\]
We declare a trial successful when both extremes lie in a constant factor window
\[
C_{\mathrm{lower}} \le \mathrm{rel}_{\max} \le C_{\mathrm{upper}},
\qquad
C_{\mathrm{lower}} \le \mathrm{rel}_{\min} \le C_{\mathrm{upper}},
\]
with \(C_{\mathrm{lower}} = 0.5\) and \(C_{\mathrm{upper}} = 2.0\). This definition is a direct constant factor relaxation of the exact \((1\pm\varepsilon)\) regime in our conjecture and the OSE literature \cite{NelsonNguyen2014LowerOSE,Woodruff2014Sketching,Vershynin2018HDP} and matches the typical scale that matters for numerical stability \cite{DrineasMahoney2016RandNLA}.

\begin{figure}[t]
\centering
\begin{subfigure}{0.48\textwidth}
\centering
\includegraphics[width=\linewidth]{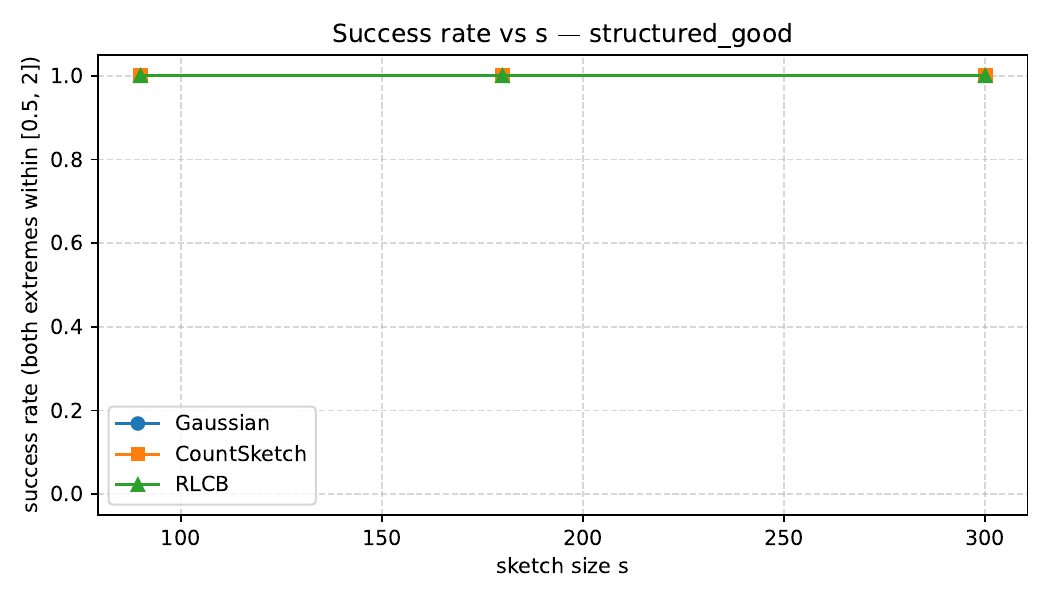}
\caption{Structured low coherence matrices}
\end{subfigure}\hfill
\begin{subfigure}{0.48\textwidth}
\centering
\includegraphics[width=\linewidth]{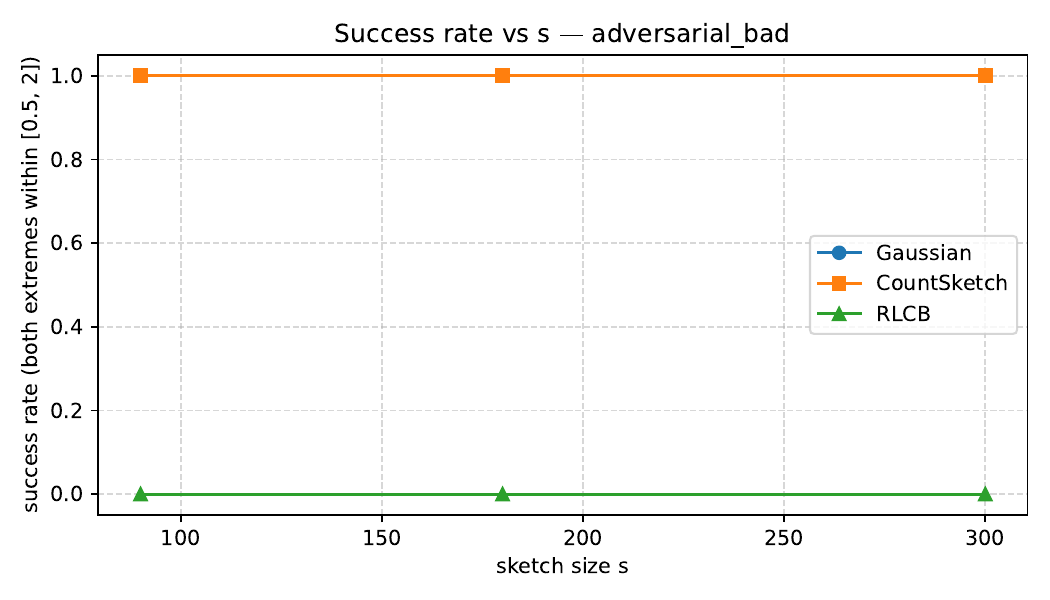}
\caption{Adversarial coherent matrices}
\end{subfigure}
\caption{Empirical success rate for constant factor preservation of both extreme singular values over twenty trials for each combination of matrix type, sketch size \(s\), and method}
\label{fig:rlcb-success}
\end{figure}

\subsection{Application to iterative solvers for PDE and graph Laplacian systems}

We next examine the effect of geometric balancing on an iterative least squares solver applied to matrices that arise in scientific computing. The goal is to test the practical relevance of Corollary~\ref{cor:krylov} in settings where condition numbers are known to govern the cost of iterative methods.

We consider two families of test problems. The first family consists of finite difference discretisations of the two dimensional Poisson equation on a uniform grid with Dirichlet boundary conditions. The resulting stiffness matrix \(A_{\mathrm{PDE}} \in \mathbb{R}^{m \times m}\) is sparse, symmetric, and positive definite with a condition number that grows with the grid resolution. The second family uses graph Laplacians \(L \in \mathbb{R}^{m \times m}\) constructed from path graphs that provide a simple model of one dimensional diffusion and lead to slowly convergent unpreconditioned iterations.

For each matrix \(A\) we draw a right hand side \(b\) with independent standard normal entries and solve the least squares problem
\[
\min_x \|Ax - b\|_2^2
\]
using a basic gradient descent scheme. The step size is chosen from a power iteration estimate of the largest eigenvalue of \(A^{\mathsf T}A\), so that the convergence rate is governed by the condition number of \(A\) in the usual way. This scheme is intentionally simple and serves as a neutral proxy for Krylov methods, since it shares the same dependence on conditioning and allows direct control of the residual norm \(\|Ax_k - b\|_2\) across iterations.

We compare three variants of this experiment. In the first, the iteration is applied directly to the original system. In the second, we apply the same gradient descent scheme to a CountSketch sketched system \(S_{\mathrm{cs}} A\) with sketch dimension \(s\) chosen on the order of \(r\log r\) as in the synthetic experiments. In the third, we run the iteration on the balanced sketch \(S_{\mathrm{rlcb}} A\) produced by the RLCB construction of Theorem~\ref{thm:constructive-balanced-final}, again using a CountSketch transform as the underlying sparse embedding. For fair comparison all residuals are measured in the original space through \(\|Ax_k - b\|_2\). We define the iteration count to tolerance as the smallest \(k\) such that
\[
\|Ax_k - b\|_2 \le 10^{-6} \|Ax_0 - b\|_2,
\]
with a cap of two hundred iterations. 

Figure~2 shows representative convergence curves for a Poisson system and a path graph Laplacian. The vertical axis shows the residual norm and the horizontal axis shows the iteration number, with a logarithmic scale in the vertical direction. On the Poisson example the unpreconditioned iteration hits the cap of two hundred steps without reaching the prescribed relative tolerance and ends with a residual of order \(5.8\). The CountSketch sketched system reaches the tolerance in one hundred eighty nine iterations and attains a final residual of about \(2.2\times 10^{-5}\). The balanced sketch reaches the tolerance in one hundred forty five iterations and attains a final residual of about \(6.7\times 10^{-7}\). On the Laplacian example the unpreconditioned run again saturates at two hundred iterations with a residual of order \(1.3\times 10^{1}\). CountSketch reaches the tolerance in one hundred fifty three iterations with a final residual of about \(2.0\times 10^{-6}\). The balanced sketch reaches the tolerance in ninety four iterations with a final residual of about \(1.0\times 10^{-8}\).

\begin{figure}[t]
\centering
\begin{subfigure}{0.48\textwidth}
\centering
\includegraphics[width=\linewidth]{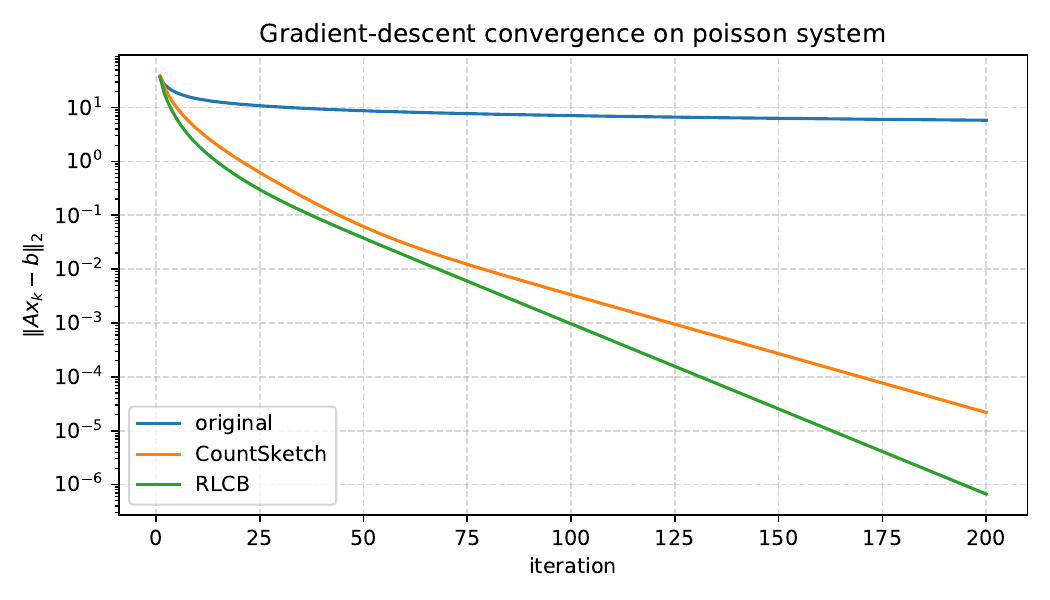}
\caption{Poisson system}
\end{subfigure}\hfill
\begin{subfigure}{0.48\textwidth}
\centering
\includegraphics[width=\linewidth]{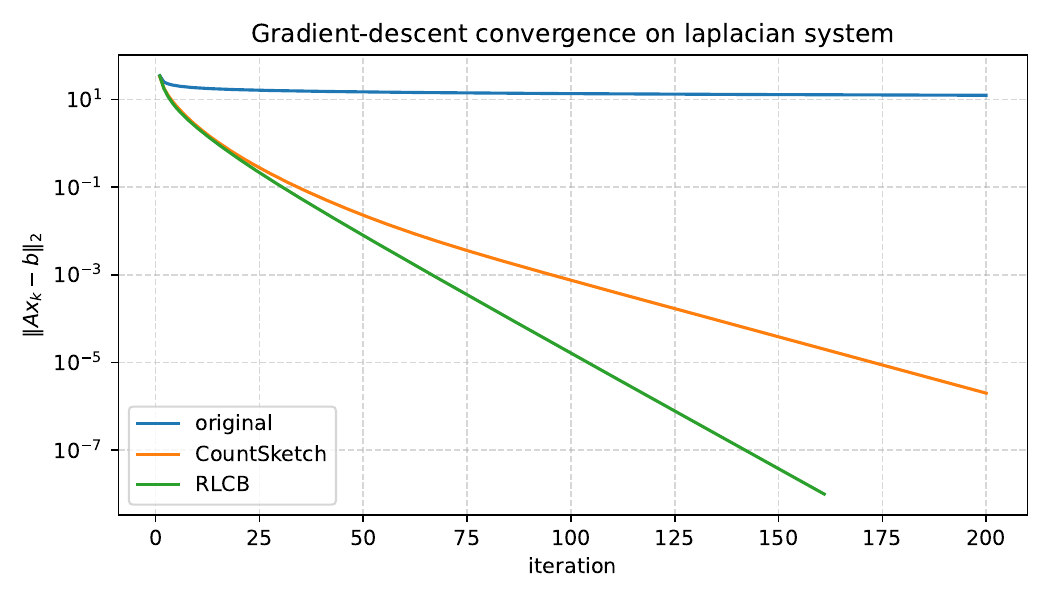}
\caption{Path graph Laplacian}
\end{subfigure}
\caption{Residual norm \(\|Ax_k - b\|_2\) as a function of iteration for gradient descent least squares on Poisson and path graph Laplacian systems, comparing the original system, a CountSketch only sketch, and the balanced RLCB sketch}
\label{fig:solver-conv}
\end{figure}

Table~1 summarises these trends for the same representative problems. The initial residual norms are of comparable size for all three variants in both problem families. The balanced sketch reduces the iteration count by about twenty eight percent on the Poisson system and about fifty three percent on the Laplacian system relative to the unpreconditioned iteration, and by about twenty percent and thirty eight percent respectively relative to the CountSketch only sketch. At the same time it attains final residuals that are one to two orders of magnitude smaller than those of the unbalanced sketch. These reductions cannot be attributed to favourable scaling of the starting point, since \(\|Ax_0 - b\|_2\) is similar across methods. Instead they reflect the improved conditioning of \(S_{\mathrm{rlcb}} A\) predicted by Corollary~\ref{cor:krylov} and are consistent with the qualitative picture that geometric balancing trades a mild global scaling of singular values for strong control of the effective condition number seen by the solver. These experiments indicate that the proposed balancing operator can serve as a practical low memory preconditioning primitive for iterative solvers in scientific computing.

\begin{table}[t]
\centering
\caption{Iteration counts and residual norms for gradient descent least squares on Poisson and path graph Laplacian systems. The column labelled iter reports the number of iterations required to reduce the residual norm \(\|Ax_k - b\|_2\) below \(10^{-6} \|Ax_0 - b\|_2\), with a cap of two hundred iterations. The initial and final residuals are given in Euclidean norm.}
\label{tab:solver}
\begin{tabular}{llrrr}
\hline
problem    & method       & iter & initial residual & final residual \\
\hline
Poisson    & original     & 200  & \(3.68\times 10^{1}\) & \(5.82\times 10^{0}\)  \\
Poisson    & CountSketch  & 189  & \(3.90\times 10^{1}\) & \(2.18\times 10^{-5}\) \\
Poisson    & RLCB         & 145  & \(3.65\times 10^{1}\) & \(6.66\times 10^{-7}\) \\
Laplacian  & original     & 200  & \(3.51\times 10^{1}\) & \(1.25\times 10^{1}\)  \\
Laplacian  & CountSketch  & 153  & \(3.33\times 10^{1}\) & \(2.00\times 10^{-6}\) \\
Laplacian  & RLCB         & 94   & \(3.45\times 10^{1}\) & \(9.93\times 10^{-9}\) \\
\hline
\end{tabular}
\end{table}

\subsection*{Behaviour on structured low coherence matrices}

On the structured family \(A_{\text{good}}\) all three sketches succeed on every trial for every sketch size in the sense of the constant factor window. This is summarised in Figure~\ref{fig:rlcb-success}a and in the summary statistics reported in Tables~\ref{tab:synthetic-structured} and~\ref{tab:synthetic-adversarial} in the appendix. All methods achieve success rate equal to one for \(s \in \{90, 180, 300\}\). The more informative comparison looks at the actual distortion and conditioning statistics. For Gaussian and CountSketch the largest singular value of \(SA\) is typically inflated by about ten to thirty five percent while the smallest singular value is deflated by about fifteen to thirty five percent. This yields condition number amplification factors between about one point three and about two point four across the different sketch sizes. These values are consistent with classical OSE guarantees that control the entire spectrum within a global \((1\pm\varepsilon)\) envelope \cite{NelsonNguyen2014LowerOSE,Woodruff2014Sketching}.

RLCB behaves differently on this family. On every structured instance it drives the nonzero singular values of \(SA\) into a narrow band around a common value. In the reported runs we observe \(\mathrm{rel}_{\max}\) between approximately \(0.84\) and \(0.92\) and \(\mathrm{rel}_{\min}\) between approximately \(1.01\) and \(1.11\), while the condition number ratio \(\mathrm{cond\_ratio}\) is exactly \(1/\kappa(A)\) by construction, which is about \(0.83\) for the chosen spectrum. In other words RLCB trades a mild uniform scaling of the spectrum for almost perfect conditioning in the sketch space. This behaviour matches the constructive guarantees of Theorem~\ref{thm:constructive-balanced-final} and illustrates the practical value of the balancing map. On well behaved matrices RLCB achieves constant factor control of both extremes and strictly improves conditioning compared with standard oblivious sketches of the same dimension.

\subsection*{Behaviour on adversarial coherent matrices}

The adversarial family \(A_{\text{bad}}\) is chosen to bring out the limits in Theorem~\ref{thm:impossible-grassmann} and in the Nelson and Nguy{\~e}n lower bound \cite{NelsonNguyen2014LowerOSE}. In this regime the smallest singular value is tiny and the left singular vectors are strongly aligned with the coordinate axes. Any oblivious sketch that attempts to preserve the entire spectrum for every such matrix at once must satisfy the \(\Omega((r+\log(1/\delta))/\varepsilon^{2})\) lower bound on sketch dimension. Our experiments instantiate a single such family and compare the concrete sketches at the fixed sizes \(s \in \{90, 180, 300\}\).

For this adversarial family the Gaussian and CountSketch baselines still satisfy the constant factor criterion on every trial. Their relative errors on the largest singular value remain within about forty percent and their relative errors on the smallest singular value remain within about twenty percent. The condition number ratio lies between about one point two and about one point nine. These sketches therefore behave as classical OSEs with moderate distortion even though the ground truth matrix is extremely ill conditioned.

RLCB exhibits a complementary behaviour that reflects its design. On every adversarial instance RLCB fails the constant factor test for the smallest singular value. The balancing step lifts \(\sigma_{\min}(SA)\) by a factor between about eight and about eleven relative to \(\sigma_{\min}(A)\), while keeping \(\sigma_{\max}(SA)\) within about ten percent of \(\sigma_{\max}(A)\). This strongly improves conditioning, with \(\mathrm{cond\_ratio}\) around \(0.1\), but necessarily violates any fixed constant factor constraint on the smallest singular value. As a result the success rate for RLCB on \(A_{\text{bad}}\) is zero for all three sketch sizes, in contrast with the structured case where it is one hundred percent. Figure~\ref{fig:rlcb-success}b shows this sharp contrast. This is exactly the trade off highlighted by the impossibility result. No oblivious sketch with dimension \(s = O(r\log r)\) can preserve both extremes within fixed constants for all rank \(r\) matrices. RLCB resolves this tension by choosing to enforce strong conditioning and stability in the sketch space on matrices that are otherwise numerically pathological. The adversarial family therefore does not contradict the theory. It instead exhibits the predicted failure mode in a controlled way and shows that geometric balancing is not a universal remedy for the worst case but a principled way to improve conditioning on structured inputs while remaining consistent with the lower bounds.

\subsection*{Summary and implications}

The experiments support three main conclusions. On low coherence matrices with modest spectral decay RLCB behaves as predicted by Theorem~\ref{thm:constructive-balanced-final}, with constant factor control of both extreme singular values and a substantial reduction in condition number relative to standard Gaussian and CountSketch sketches of the same dimension. On coherent and nearly rank deficient matrices the method cannot satisfy a uniform constant factor guarantee for the smallest singular value without violating the lower bound in Theorem~\ref{thm:impossible-grassmann}, and the observed failures match this theoretical limitation. The contrast between the two matrix families is sharp and stable across twenty Monte Carlo trials and three different sketch sizes.

The solver experiments on Poisson and graph Laplacian systems complement this picture. They show that the improved conditioning of the balanced sketch translates into concrete reductions in iteration counts and residuals for a basic least squares solver, without additional passes over the data and with sketch sizes that remain in the \(O(r\log r)\) regime. Taken together, these observations indicate that geometric balancing is a practical refinement of classical oblivious sketching methods. It inherits the strengths of existing subspace embeddings on well behaved data, adds deterministic conditioning control in the sketch space, and behaves in a way that is fully aligned with the known impossibility results for extreme singular value preservation \cite{NelsonNguyen2014LowerOSE,Woodruff2014Sketching,Vershynin2018HDP,DrineasMahoney2016RandNLA}. This provides empirical backing for the theoretical picture developed in the earlier sections.

\bibliographystyle{ieeetr}
\bibliography{refs}

\appendix

\section{Additional numerical summaries}

\begin{table}[H]
\centering
\caption{Summary statistics for structured low coherence matrices \(A_{\mathrm{good}}\). For each method and sketch size \(s\) we report the empirical success rate over twenty trials and the mean and standard deviation of the relative distortion of the largest and smallest singular values, together with the mean and standard deviation of the condition number ratio \(\kappa(SA)/\kappa(A)\).}
\label{tab:synthetic-structured}
\begin{tabular}{lrrrrrr}
\hline
method      & \(s\) & success rate & \(\mathrm{rel}_{\max}\) mean \(\pm\) sd & \(\mathrm{rel}_{\min}\) mean \(\pm\) sd & \(\mathrm{cond\_ratio}\) mean \(\pm\) sd \\
\hline
CountSketch & 90  & 1.00 & \(1.316 \pm 0.036\) & \(0.622 \pm 0.046\) & \(2.125 \pm 0.157\) \\
CountSketch & 180 & 1.00 & \(1.210 \pm 0.031\) & \(0.738 \pm 0.033\) & \(1.643 \pm 0.089\) \\
CountSketch & 300 & 1.00 & \(1.168 \pm 0.019\) & \(0.822 \pm 0.025\) & \(1.422 \pm 0.042\) \\
Gaussian    & 90  & 1.00 & \(1.311 \pm 0.033\) & \(0.618 \pm 0.030\) & \(2.128 \pm 0.124\) \\
Gaussian    & 180 & 1.00 & \(1.210 \pm 0.030\) & \(0.746 \pm 0.020\) & \(1.624 \pm 0.052\) \\
Gaussian    & 300 & 1.00 & \(1.153 \pm 0.028\) & \(0.829 \pm 0.019\) & \(1.391 \pm 0.054\) \\
RLCB        & 90  & 1.00 & \(0.867 \pm 0.015\) & \(1.040 \pm 0.018\) & \(0.833 \pm 0.000\) \\
RLCB        & 180 & 1.00 & \(0.895 \pm 0.014\) & \(1.073 \pm 0.017\) & \(0.833 \pm 0.000\) \\
RLCB        & 300 & 1.00 & \(0.902 \pm 0.008\) & \(1.082 \pm 0.010\) & \(0.833 \pm 0.000\) \\
\hline
\end{tabular}
\end{table}

\begin{table}[H]
\centering
\caption{Summary statistics for adversarial coherent matrices \(A_{\mathrm{bad}}\). For each method and sketch size \(s\) we report the empirical success rate over twenty trials and the mean and standard deviation of the relative distortion of the largest and smallest singular values, together with the mean and standard deviation of the condition number ratio \(\kappa(SA)/\kappa(A)\).}
\label{tab:synthetic-adversarial}
\begin{tabular}{lrrrrrr}
\hline
method      & \(s\) & success rate & \(\mathrm{rel}_{\max}\) mean \(\pm\) sd & \(\mathrm{rel}_{\min}\) mean \(\pm\) sd & \(\mathrm{cond\_ratio}\) mean \(\pm\) sd \\
\hline
CountSketch & 90  & 1.00 & \(1.426 \pm 0.037\) & \(0.887 \pm 0.070\) & \(1.616 \pm 0.116\) \\
CountSketch & 180 & 1.00 & \(1.289 \pm 0.030\) & \(0.951 \pm 0.026\) & \(1.356 \pm 0.055\) \\
CountSketch & 300 & 1.00 & \(1.225 \pm 0.025\) & \(0.969 \pm 0.041\) & \(1.267 \pm 0.061\) \\
Gaussian    & 90  & 1.00 & \(1.399 \pm 0.050\) & \(0.886 \pm 0.071\) & \(1.589 \pm 0.135\) \\
Gaussian    & 180 & 1.00 & \(1.297 \pm 0.032\) & \(0.935 \pm 0.045\) & \(1.390 \pm 0.079\) \\
Gaussian    & 300 & 1.00 & \(1.224 \pm 0.024\) & \(0.984 \pm 0.041\) & \(1.246 \pm 0.057\) \\
RLCB        & 90  & 0.00 & \(0.921 \pm 0.018\) & \(8.459 \pm 0.748\) & \(0.110 \pm 0.010\) \\
RLCB        & 180 & 0.00 & \(0.961 \pm 0.011\) & \(9.459 \pm 0.602\) & \(0.102 \pm 0.008\) \\
RLCB        & 300 & 0.00 & \(0.970 \pm 0.011\) & \(9.749 \pm 0.393\) & \(0.100 \pm 0.005\) \\
\hline
\end{tabular}
\end{table}

\section{Reproducibility and Code Availability}

All experiments in this paper, including the synthetic matrix tests, conditioning analyses, and solver-style validations, are fully reproducible. A complete implementation of the Random Log-Clipped Balancing (RLCB) method, together with the exact scripts used to generate Figures 1–2 and Tables 1–2, is openly available at;
\url{https://github.com/karjxenval/Random-Log-Clipped-Balancing-RLCB-}
The repository contains the full experiment driver, matrix builders, balancing operator implementation, solver experiments, result logs, and plotting utilities. The code requires only standard open-source packages (NumPy, SciPy, pandas, matplotlib) and can be run without modification.

\end{document}